\documentclass{scrartcl}

\setkomafont{disposition}{\bfseries}

\usepackage{fullpage}
\usepackage{lmodern}
\usepackage[T1]{fontenc}
\usepackage{ae}
\usepackage{aecompl}

\usepackage{microtype} 
\usepackage{mathtools}
\usepackage{amsthm, amssymb, commath}

\usepackage{tikz}
\usepackage{tkz-graph}
\usetikzlibrary{matrix}
 \usetikzlibrary{shapes}
\usetikzlibrary{arrows,decorations.markings}
\usepackage{tikz-cd}
\usepackage{enumerate}

\delimitershortfall=10pt
\delimiterfactor=850

\theoremstyle{definition}
\newtheorem{theorem}{Theorem} 
\newtheorem{proposition}[theorem]{Proposition}    
\newtheorem{definition}[theorem]{Definition}  
\newtheorem{prgrph}[theorem]{} 
\newtheorem{lemma}[theorem]{Lemma} 
 
\newtheorem{notation}[theorem]{Notation} 
 
\newtheorem*{ack}{Acknowledgements}

\newcommand{\Q}{\mathbb{Q}}
\newcommand{\C}{\mathbb{C}}
\newcommand{\M}{\overline{\mathcal{M}}}
\newcommand{\B}{\overline{\mathcal{B}}}
\renewcommand{\H}{\overline{\mathcal{H}}}
\newcommand{\Ba}{\B^{\textup{Adm}}}
\newcommand{\Ha}{\H^{\textup{Adm}}}

\DeclareMathOperator{\codim}{codim}

\DeclareMathOperator{\spec}{Spec}
\DeclareMathOperator{\supp}{Supp}
\newcommand{\Bo}{\mathcal{B}}
\newcommand{\Mo}{\mathcal{M}}
\newcommand{\Ho}{\mathcal{H}}

 \author{Jason van Zelm}
\title{Nontautological Bielliptic Cycles}
\date{}

\begin{document}

\maketitle

\begin{abstract}
Let $[\B_{2,0,20}]$ and $[\Bo_{2,0,20}]$ be the classes of the loci of stable resp. smooth bielliptic curves with~$20$ marked points where the bielliptic involution acts on the marked points as the permutation $(1\; 2)...(19\; 20)$. Graber and Pandharipande proved in \cite{Graber2001} that these classes are nontatoulogical. In this note we show that their result can be extended to prove that $[\B_{g}]$ is nontautological for $g\geq 12$ and that $[\Bo_{12}]$ is nontautological.
\end{abstract}

\section{Introduction}

 The system of \emph{tautological rings} $\{ R^\bullet(\M_{g,n})\}$ is defined to be the minimal system of $\Q$-subalgebras of the Chow rings $A^\bullet(\M_{g,n})$ closed under pushforward (and hence pullback) along the natural gluing and forgetful morphisms
\begin{align*}
 \M_{g_1,n_1+1}\times \M_{g_2,n_2+1}& \longrightarrow \M_{g_1+g_2,n_1+n_2},\\
 \M_{g,n+2}& \longrightarrow \M_{g+1,n},\\
 \M_{g,n+1}&\longrightarrow \M_{g,n}.
\end{align*}
The tautological ring $R^\bullet(\Mo_{g,n})$ of the moduli space of smooth curves is the image of~$R^\bullet(\M_{g,n})$ under the localization morphism $A^\bullet(\M_{g,n})\rightarrow A^\bullet(\Mo_{g,n})$. We will denote by $RH^{2\bullet}(\M_{g,n})$ the image of $R^\bullet(\M_{g,n})$ under the cycle map $A^\bullet(\M_{g,n})\rightarrow H^{2\bullet}(\M_{g,n})$ and define $RH^{2\bullet}(\Mo_{g,n})$ accordingly. We say a cohomology class is \emph{tautological} if it lies in the tautological subring of its cohomology ring, otherwise we say it is \emph{nontautological}. In this note we will work over $\C$ and all Chow and cohomology rings are assumed to be taken with rational coefficients.

These tautological rings are relatively well understood. An additive set of generators for the groups $R^\bullet(\M_{g,n})$ is given by decorated boundary strata and there exists an algorithm for computing the intersection product (see \cite{Graber2001}). The class of many ``geometrically defined'' loci can be shown to be tautological, for example this is the case for the class of the locus $\H_g$ of hyperelliptic curves in $\M_g$ (see \cite[Theorem 1]{Faber2005}).

Any odd cohomology class of $\M_{g,n}$ is nontautological by definition. Deligne proved that $H^{11}(\M_{1,11})\neq 0$, thus providing a first example of the existence of nontautological classes. In fact it is known that $H^\bullet(\M_{0,n})=RH^\bullet(\M_{0,n})$ (see \cite{Keel}) and that $H^{2\bullet}(\M_{1,n})=RH^{2\bullet}(\M_{1,n})$ for all $n$ (see \cite[Corollary 1.2]{petersen2014}).
 
Examples of geometrically defined loci which can be proven to be nontautological are still relatively scarce.  In \cite{Graber2001} Graber and Pandharipande hunt for algebraic classes in $H^{2\bullet}(\M_{g,n})$ and $H^{2\bullet}(\Mo_{g,n})$ which are nontautological. In particular they show that the classes of the loci~$\B_{2,0,20}$ and $\Bo_{2,0,20}$ of stable resp. smooth bielliptic curves of genus 2 with 20 marked points where the bielliptic involution acts on the set of marked points as the permutation $(1\; 2)...(19\; 20)$ are nontautological. They also show that for sufficiently high odd genus $h$ the class of the locus of stable curves of genus $2h$ admitting a map to a curve of genus $h$ is nontautological in $H^{2\bullet}(\M_{2h})$. Their result relies on the existence of odd cohomology in $H^\bullet(\M_{h,1})$ which has been proven to exist in \cite{pikaart1994} for all $h\geq 8069$. A recent survey of different methods of obtaining nontautological classes can be found in \cite{faber2013}.

In this note we prove the following two new results.
 \begin{theorem}\label{thmain}
 The cohomology class $[\B_{g,n,2m}]$ is nontautological for all $g+m\geq 12$, $0\leq n \leq 2g-2$ and $g\geq 2$.
\end{theorem}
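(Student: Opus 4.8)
The strategy is to reduce the general case to the known result of Graber and Pandharipande on $[\Bo_{2,0,20}]$ (equivalently $[\B_{2,0,20}]$) by constructing maps between moduli spaces that send one bielliptic locus into (a controlled modification of) another, and using functoriality of tautological rings. Since tautological rings are closed under pushforward and pullback along the gluing and forgetful morphisms, it suffices to produce, for each admissible $(g,n,2m)$ with $g+m\geq 12$, a sequence of such natural morphisms under which the nontautological class $[\B_{2,0,20}]$ would be forced to be tautological if $[\B_{g,n,2m}]$ were. Concretely I would proceed in three steps.

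\emph{Step 1: Increase the genus via elliptic tails.} Given a bielliptic curve $C\to E$ of genus $2$ with an action by the bielliptic involution, I attach fixed elliptic tails (or more precisely, fixed bielliptic building blocks) at pairs of points to raise the genus. The gluing morphism $\M_{2,0,20}\to \M_{g}$, obtained by attaching a fixed curve at the $20$ marked points in $10$ involution-symmetric pairs and then stabilizing, will, suitably chosen, carry a component of $\B_{2,0,20}$ into $\B_{g}$; more robustly, one realizes $\B_{g}$ itself as a pushforward of $[\B_{2,0,20}]$ up to a tautological correction, using that the generic bielliptic curve of genus $g$ degenerates to a $2$-pointed-at-a-pair configuration whose normalization contains the genus-$2$ bielliptic piece. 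Transversality and the multiplicity-one nature of such boundary gluings must be checked so that the set-theoretic image equals the cycle-theoretic pushforward up to lower-dimensional (tautological) junk.

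\emph{Step 2: Adjust the number of marked points.} The forgetful morphisms $\M_{g,n+1}\to\M_{g,n}$ and their iterates let me remove marked points, and pullback along them (together with sections given by tautological diagonal classes) lets me add marked points, in each case carrying $\B_{g,n,2m}$ to $\B_{g,n',2m'}$ up to tautological terms, provided $n'\leq 2g-2$ so that the marked points can be placed on the $2g-2$ ramification-free locus of the bielliptic map compatibly with the involution. This is where the hypothesis $0\leq n\leq 2g-2$ enters: it guarantees the combinatorial freedom to distribute fixed and swapped marked points. The point $m$ (half the number of swapped pairs) contributes to the genus bound because a swapped pair can be traded for a genus increment via the gluing in Step 1, which is exactly why the hypothesis reads $g+m\geq 12$ rather than $g\geq 12$.

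\emph{Step 3: Close the loop.} Combining Steps 1 and 2, if $[\B_{g,n,2m}]$ were tautological for some admissible triple with $g+m\geq 12$, then applying the appropriate composition of gluing, forgetful, pullback and pushforward maps would exhibit $[\B_{2,0,20}]$ as tautological, contradicting \cite{Graber2001}. The main obstacle I anticipate is Step 1: controlling the pushforward of the bielliptic cycle under boundary gluing precisely enough — i.e. showing that the ``error terms'' from non-transversality, from strata where the attached curve fails to carry the involution, and from excess intersection are all genuinely tautological — rather than merely knowing the set-theoretic image. This likely requires an explicit analysis of the boundary behavior of $\B_{g}$, in the spirit of the admissible-covers compactification $\Ba$, $\Ha$ appearing in the notation, so that the relevant gluing maps can be replaced by finite maps from moduli of admissible bielliptic covers whose pushforwards are transparently computable.
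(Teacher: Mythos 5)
Your proposal is not the paper's argument, and the key step, as you state it, does not work. In Step 1 you say that one can ``realize $\B_g$ itself as a pushforward of $[\B_{2,0,20}]$ up to a tautological correction.'' This is impossible on dimensional and support grounds: the image of any gluing morphism $\M_{2,20}\to\M_g$ lies entirely in the boundary $\partial\M_g$, whereas $\B_g$ is the closure of the locus of \emph{smooth} bielliptic curves and is therefore not contained in the boundary. No tautological correction term, which would again be a class of codimension $g-1$ in $\M_g$, can account for the difference between a boundary-supported cycle and $[\B_g]$. The logically correct and geometrically available move is the reverse: \emph{pull back} $[\B_{g+1}]$ along the elliptic-tail gluing $\epsilon\colon\M_{g,1}\times\M_{1,1}\to\M_{g+1}$. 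The paper (Lemma \ref{lemg+1}) computes $\epsilon^*[\B_{g+1}]=\alpha[\B_{g,1,0}\times\M_{1,1}]+\beta[\H_{g-1,0,2}\times\M_{1,1}^D]$ with $\alpha,\beta>0$; the hyperelliptic summand is tautological, so by Proposition~\ref{prop:Kunneth} nontautologicality of $[\B_{g,1,0}]$ forces nontautologicality of $[\B_{g+1}]$. Notice this inference runs from lower genus to higher genus, which is what the induction needs, and is incompatible with the pushforward framing you propose.

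A second, structural, issue: the paper does not reduce everything to the single Graber--Pandharipande case $[\B_{2,0,20}]$. It re-proves a whole family of base cases $[\B_{g,0,2m}]$ for $g+m=12$ and $g\geq 2$ directly (Proposition~\ref{easyprop}, via Lemma~\ref{lem:pullback}), by the same mechanism Graber--Pandharipande use for $g=2$: glue two copies of $\Mo_{1,11}$ along $g-1$ pairs of points into $\M_{g,2m}$, identify the pullback of the bielliptic class with a positive multiple of the diagonal $[\Delta_o]$, and exploit $H^{11}(\M_{1,11})\neq 0$ plus Lemma~\ref{lem:CohM11} to see that the diagonal has no tautological K\"unneth decomposition. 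From those base cases one then climbs upward via Lemmas~\ref{lem:add1}, \ref{lem:add2}, \ref{lemg+1}. Your proposed chain of reductions down to $(2,0,20)$ would have to pass through the other base cases $g=3,\ldots,12$, and you give no mechanism for doing so. Even for the step analogous to Lemma~\ref{lemg+1}, the contrapositive direction you want requires knowing the explicit decomposition of the pulled-back class and that the extra (hyperelliptic) term is tautological --- precisely the content you flag as the ``main obstacle'' but do not resolve. In short, the pushforward idea in Step~1 is wrong in direction and the overall reduction-to-genus-2 architecture is not what makes the theorem go through.
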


\begin{theorem}\label{thmainop}
The cohomology class $[\Bo_{g,0,2m}]$ is nontautological when $g+m=12$ and $g\geq 2$.
\end{theorem}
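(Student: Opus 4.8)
\section*{Proof proposal for Theorem~\ref{thmainop}}

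The plan is to deduce Theorem~\ref{thmainop} from Theorem~\ref{thmain} by analysing the restriction to the open moduli space. By Theorem~\ref{thmain} with $n=0$, the class $[\B_{g,0,2m}]$ is nontautological in $H^{2\bullet}(\M_{g,2m})$; as $\Bo_{g,0,2m}$ has codimension $(3g-3+2m)-(2g-2+m)=g-1+m=11$ in $\Mo_{g,2m}$ (here $g+m=12$), this is a class in $H^{22}(\M_{g,2m})$. Since $[\Bo_{g,0,2m}]$ is the restriction of $[\B_{g,0,2m}]$ along the open inclusion $\iota\colon\Mo_{g,2m}\hookrightarrow\M_{g,2m}$, and since by definition every element of $RH^{22}(\Mo_{g,2m})$ is $\iota^{*}$ of an element of $RH^{22}(\M_{g,2m})$, it is enough to prove
\[
\ker\bigl(\iota^{*}\colon H^{22}(\M_{g,2m})\longrightarrow H^{22}(\Mo_{g,2m})\bigr)\ \subseteq\ RH^{22}(\M_{g,2m}).
\]
Indeed, were $[\Bo_{g,0,2m}]=\iota^{*}\tau$ with $\tau$ tautological, the class $[\B_{g,0,2m}]-\tau$ would lie in this kernel, hence be tautological, forcing $[\B_{g,0,2m}]$ to be tautological and contradicting Theorem~\ref{thmain}.

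The kernel of $\iota^{*}$ is generated by the Gysin pushforwards $\xi_{\Gamma,*}\beta$ along the boundary strata maps $\xi_{\Gamma}\colon\M_{\Gamma}\to\M_{g,2m}$, where $\Gamma$ runs over stable graphs with at least one edge, $\M_{\Gamma}$ is the quotient of $\prod_{v}\M_{g_{v},n_{v}}$ by $\mathrm{Aut}(\Gamma)$, and $\beta\in H^{22-2\codim\Gamma}(\M_{\Gamma})$. Pushforward along the gluing maps preserves tautological classes, so it suffices to show each $\beta$ is tautological. Pull $\beta$ back to $\prod_{v}\M_{g_{v},n_{v}}$ and decompose it into Künneth components $\bigotimes_{v}\beta_{v}$: a nontautological component must involve either a factor with nontautological even cohomology, or two factors with odd cohomology. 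Genus-$0$ factors contribute only tautological even classes, and by Petersen's theorem the even cohomology of every $\M_{1,n}$ is tautological while its odd cohomology vanishes in degrees below $11$. A parity count excludes a single odd factor (the complementary degree is odd and cannot be filled by even classes), and two odd factors would force the degree $22-2\codim\Gamma$ to be at least $11+11=22$, i.e.\ $\codim\Gamma\leq 0$, impossible since $\Gamma$ has an edge. The remaining danger, a factor $\M_{g_{v},n_{v}}$ with $g_{v}\geq 2$ contributing a nontautological \emph{even} class in some degree $\leq 20$, has to be ruled out using the known low-degree cohomology of the $\M_{g_{v},n_{v}}$ that occur as vertices of a genus-$g$, $2m$-pointed stable graph with $g+m=12$.

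This is also where $g+m=12$ is used in an essential way, and the bound is sharp. For $g+m\geq 13$ the relevant degree is $2(g-1+m)\geq 24$, and the kernel picks up nontautological classes: already with $g=2$, $m=11$ the separating-node divisor $\M_{1,12}\times\M_{1,12}\subset\M_{2,22}$ has codimension $1$, its cohomology in degree $22$ contains the nontautological class $H^{11}(\M_{1,12})\otimes H^{11}(\M_{1,12})$, and its Gysin pushforward lies in $\ker\iota^{*}\subseteq H^{24}(\M_{2,22})$, so the argument above no longer applies. For $(g,m)=(2,10)$ the statement is the one established by Graber and Pandharipande, and the above also explains why their example does not immediately extend to larger $g+m$.

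The step I expect to be the main obstacle is the last point of the second paragraph: checking that no boundary stratum of $\M_{g,2m}$ contributes a nontautological class in degree $22$. Carrying this out requires a careful enumeration of the stable graphs $\Gamma$ and their automorphism groups (so that the finite-quotient and Künneth bookkeeping is rigorous), Petersen's theorem for the genus-$\leq 1$ vertices, and — most delicately — control over the even cohomology in low degrees of the higher-genus vertices that can occur, which is the point at which an unexpected nontautological even class would sink the proof. A minor additional point is to ensure that Theorem~\ref{thmain} is applied exactly in the case $n=0$, $g+m=12$, and in the cohomological degree $2(g-1+m)$.
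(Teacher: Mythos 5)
Your proposal is a genuinely different route from the paper's, but it relies on a claim that is substantially stronger than the theorem and not currently provable. You reduce to showing
\[
\ker\bigl(\iota^{*}\colon H^{22}(\M_{g,2m})\longrightarrow H^{22}(\Mo_{g,2m})\bigr)\subseteq RH^{22}(\M_{g,2m}),
\]
and then try to establish this by showing that every class $\beta\in H^{22-2\codim\Gamma}(\M_\Gamma)$ on every boundary stratum is tautological. You flag, but seriously underestimate, the difficulty of the case where $\Gamma$ has a vertex of genus $g_v\geq 2$. Take $g=2$, $m=10$: among the boundary divisors of $\M_{2,20}$ is the image of $\M_{0,3}\times\M_{2,19}$, and your approach requires $H^{20}(\M_{2,19})$ to be tautological. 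That is an open question, and quite possibly harder than Theorem~\ref{thmainop}. The same problem recurs for many $(g_v,n_v)$. Petersen's result only covers the genus~$\leq 1$ vertices, which are the easy part. Moreover the reduction itself is lossy: even if some $\beta$ were nontautological on $\M_\Gamma$, its Gysin pushforward could still be tautological, so ``show each $\beta$ is tautological'' is sufficient but not necessary, meaning your approach could fail to close even when the theorem is true. You would need either a classification of low-degree cohomology of all relevant $\M_{h,k}$, or a finer argument about which pushforwards can be nontautological; neither is supplied.

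The paper avoids all of this by never trying to prove that the kernel of restriction is tautological. Instead it argues by contradiction: if $[\Bo_{g,0,2m}]$ were tautological, there would exist boundary-supported cycles $Z_i$ of codimension $11$ with $\sum[Z_i]+[\B_{g,0,2m}]$ tautological. One then pulls back only along the specific gluing map $\iota_2\colon\M_{1,11}\times\M_{1,11}\to\M_{g,2m}$ (the same one that detects the diagonal in Proposition~\ref{easyprop}), and shows that $\iota_2^{*}[Z_i]$ admits a tautological K\"unneth decomposition for every $i$. This is done case by case: if $\supp Z_i\subset\Delta_j$, then $\iota_2^{*}[Z_i]$ is supported on $\partial(\M_{1,11}\times\M_{1,11})$ and Lemma~\ref{lem:CohM11}.\ref{point2} applies; if $\supp Z_i\subset\Delta_{\mathrm{irr}}$, then $\iota_2$ factors through $\M_{g-1,2m+2}$ and the self-intersection formula writes $\iota_2^{*}[Z_i]$ as a product of classes of codimension $<11$, to which Lemma~\ref{lem:CohM11}.\ref{point1} applies. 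Since $\iota_2^{*}[\B_{g,0,2m}]$ does \emph{not} admit a tautological K\"unneth decomposition, this is a contradiction. Note that the whole argument only needs control of $H^{\bullet}(\M_{1,11})$ — no information about the cohomology of any $\M_{h,k}$ with $h\geq 2$ is required. Your proposal as stated has a genuine gap here that the paper's targeted pullback neatly circumvents; the discussion of why $g+m=12$ is sharp and the parity/degree bookkeeping for odd classes are correct and in the right spirit, but they do not close the higher-genus-vertex case.
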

 With Theorem \ref{thmain} we improve the genus for which algebraic nontautological classes on $\M_{g}$ are known to exists from 16138 to 12. As far as the author is aware, Theorem \ref{thmainop} provides the first example of a nontautological algebraic class on $\Mo_g$.

 \begin{ack}
 The author would like to thank his PhD supervisor Nicola Pagani for the many helpful discussions leading up to this paper. The author is supported by a GTA PhD fellowship at the University of Liverpool.
\end{ack}

 \section{Preliminaries}

 Admissible double covers were introduced to compactify moduli spaces of double covers of smooth curves, let us recall the definition:

\begin{definition}
 Let $(S,x_1,...,x_k,y_1,...,y_{2m})$ be a stable pointed curve of arithmetic genus $g$. An \emph{admissible double cover} is the data of a stable pointed curve $(T,x'_1,...,x'_{k},y'_1,...,y'_m)$ of arithmetic genus $g'$ and a 2-to-1 map $f\colon S\rightarrow T$ satisfying the following conditions:
 \begin{itemize}
  \item the restriction to the smooth locus $f^{\text{sm}}\colon S^{\text{sm}}\rightarrow T^{\text{sm}}$ is branched exactly at the points $x'_1,...,x'_k$ and the inverse image of $x'_i$ is $x_i$ for all $i=1,...,k$,
  \item the inverse image of $y'_i$ under $f$ is $\{y_{2i},y_{2i+1}\}$,
  \item the image under $f$ of each node is a node.
 \end{itemize}
We call $S$ the \emph{source} curve and $T$ the \emph{target} curve of the admissible cover. An \emph{admissible hyperelliptic structure} on $S$ is an admissible cover where $g'=0$ and an \emph{admissible bielliptic structure} on $S$ is an admissible cover with $g'=1$. Note that the admissible double cover $S\rightarrow T$ induces an involution on $S$ fixing the points $x_1,...,x_k$ and permuting the points $y_1,...,y_{2m}$ pairwise.
\end{definition}

\noindent One can define families of admissible double covers and isomorphisms between them (see \cite[Section 4]{acv}). By using the Riemann-Hurwitz formula and by induction on the number of nodes we can deduce that the number $k$ in the above definition equals $2g+2-4g'$. We denote the moduli stack of admissible bielliptic covers with $2m$ marked points switched by the involution by $\Ba_{g,2m}$. When $m=0$ we simply write $\Ba_g$. 

A natural target map and source map from each moduli space of admissible double covers can be defined as follows. The target map is a finite surjective map which sends each admissible cover to the target stable pointed curve  $(T,x'_1,...,x'_{k},y'_1,...,y'_m)\in \M_{g',k+m}$. From the properness of $\M_{g',k+m}$ we deduce that the space of such admissible covers is proper. The dimension of the space of such admissible double covers equals $2g-g'+2m-1$.    In the bielliptic case we get 
\begin{align*}
 \dim \Ba_{g,2m}&= 2g-2+m.
\end{align*}
The source map forgets all the structure of an admissible double cover except for 
\[(S,x_1,...,x_{k},y_1,...,y_{2m})\in \M_{g,k+2m}.\] 
In the bielliptic case this gives a  map $\Ba_{g,2m}\rightarrow \M_{g,2g-2+2m}$. We can compose this map with a composition of forgetful maps $\M_{g,2g-2+2m} \rightarrow \M_{g,n+2m}$ which forgets the first $2g-2-n$ points (which therefore correspond to the first $2g-2-n$ ramification points of the admissible bielliptic covers) and stabilizes. We denote by $\B_{g,n,2m}$ the image substack of $\Ba_{g,2m}$ in $\M_{g,n+2m}$. The above discussion can be summarized in the following diagram:
\begin{center}
\begin{tikzcd}
 \Ba_{g,2m}\arrow{d}\arrow{r} & \B_{g,n,2m} \arrow[hook]{r} &\M_{g,n+2m}\\
 \M_{1,2g-2+m}
\end{tikzcd}.
\end{center}
 The moduli stack $\Bo^{\text{Adm}}_{g,2m}$ is the open dense substack of $\Ba_{g,2m}$ of admissible bielliptic covers of smooth curves and we denote its image stack in $\Mo_{g,n+2m}$ by $\Bo_{g,n,2m}$. We have well defined Chow classes 
\begin{align*}
[\B_{g,n,2m}]\in  A^{g-1+n+m}(\M_{g,n+2m})\\
[\Bo_{g,n,2m}]\in  A^{g-1+n+m}(\Mo_{g,n+2m}).
\end{align*}
We will abuse notation and also denote the image of these classes in the respective cohomology rings by $[\B_{g,n,2m}]$ and $[\Bo_{g,n,2m}]$. In a completely analogous way, we can define spaces of admissible hyperelliptic covers $\Ha_{g,2m}$ and the loci $\H_{g,n,2m}$ and $\Ho_{g,n,2m}$ in $\M_{g,n+2m}$ and $\Mo_{g,n+2m}$ for all~$0\leq n \leq 2g+2$.

\begin{notation}
 We will denote by $\M_{g,n}^D$ (resp. $\Mo^D_{g,n}$) the moduli stack parameterizing trivial \'{e}tale double covers 
 \[
f\colon (C_1;y_{1,1},...,y_{n,1})\cup (C_2; y_{1,2},...,y_{n,2}) \rightarrow (C;y_{1},...,y_n) 
 \]
 mapping two isomorphic stable (resp. smooth) curves $(C_1;y_{1,1},...,y_{n,1})\simeq (C_2; y_{1,2},...,y_{n,2})$ to a curve $(C;y_{1},...,y_n) \simeq (C_1;y_{1,1},...,y_{n,1})$ such that $f^{-1}(y_i)=(y_{i,1},y_{i,2})$. 
\end{notation}

\noindent Our proof of Theorem \ref{thmain} relies on the following result for pullbacks along gluing morphisms.

\begin{proposition}[{\cite[Proposition 1]{Graber2001}}] \label{prop:Kunneth}
 Let $\xi\colon \M_{g_1,n_1+1}\times \M_{g_2,n_2+1} \rightarrow \M_{g_1+g_2,n_1+n_2}$ be the gluing morphism and let $\gamma \in RH^{\bullet}(\M_{g_1+g_2,n_1+n_2})$, then 
 \[
  \xi^*(\gamma)\in RH^{\bullet}(\M_{g_1,n_1+1})\otimes RH^{\bullet}(\M_{g_2,n_2+1}).
 \]
\end{proposition}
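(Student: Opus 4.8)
The plan is to argue by induction on the complexity of the tautological class $\gamma$, using the fact that $RH^\bullet(\M_{g_1+g_2,n_1+n_2})$ is generated, as a $\Q$-algebra, by classes pushed forward from the boundary along the standard gluing morphisms (decorated with $\psi$- and $\kappa$-classes). Since $\xi^*$ is a ring homomorphism and the target of the asserted containment, $RH^\bullet(\M_{g_1,n_1+1})\otimes RH^\bullet(\M_{g_2,n_2+1})$, is a subring of $H^\bullet(\M_{g_1,n_1+1}\times\M_{g_2,n_2+1})$ (here one uses the Künneth formula, which holds because the cohomology of these moduli spaces is finitely generated and the tautological rings are closed under the Künneth components), it suffices to check the statement on a set of algebra generators. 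So the core of the proof reduces to two local computations: first, that $\xi^*$ of a $\psi$- or $\kappa$-class lies in the claimed subring; and second, that $\xi^*$ of a boundary class $\zeta_*(\alpha)$, where $\zeta$ is a gluing morphism and $\alpha$ is tautological on the (smaller) boundary piece, also lies there.

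For the first computation I would use the standard comparison formulas: pulling a $\kappa$-class back along $\xi$ splits it as a sum of the corresponding $\kappa$-classes on the two factors (by additivity of $\kappa$ under normalization), and pulling a $\psi$-class at a marked point back along $\xi$ simply gives the $\psi$-class at the corresponding point on whichever factor carries that marking — both manifestly in $RH^\bullet(\M_{g_1,n_1+1})\otimes RH^\bullet(\M_{g_2,n_2+1})$ (the second tensor factor being $1$ when the decoration lives on the first curve, and vice versa). For the second computation — pulling back a boundary pushforward along $\xi$ — the key tool is the excess intersection / fiber-product description of how two boundary gluing maps meet inside $\M_{g_1+g_2,n_1+n_2}$: the fiber product of $\xi$ with another gluing map $\zeta$ is itself a disjoint union of products of moduli spaces glued along various combinations of the nodes, and one applies base change (push-pull) to rewrite $\xi^*\zeta_*(\alpha)$ as a sum of pushforwards, along gluing maps into $\M_{g_1,n_1+1}$ and $\M_{g_2,n_2+1}$ separately, of tautological classes. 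Because the fibered product decomposes as a product over the two sides of the original splitting, each resulting term is visibly a pure tensor with tautological factors, and summing gives an element of the tensor product. The decorations (extra $\psi$/$\kappa$ factors on $\alpha$) are carried along by the first computation together with compatibility of $\psi$-classes with gluing.

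The main obstacle is the bookkeeping in the second step: describing the fiber product of two gluing morphisms into $\M_{g_1+g_2,n_1+n_2}$ explicitly enough to see that every component splits compatibly with the $(g_1,n_1)\,|\,(g_2,n_2)$ partition, and in particular handling the case where the boundary divisor being pulled back is the \emph{same} divisor that $\xi$ parametrizes (so the fiber product is non-transverse and one picks up an excess class — which is itself a $\psi$-class at the two branches of the node, hence tautological and, crucially, a sum of a class on the first factor times a class on the second). Once this self-intersection case and the combinatorics of matching half-edges are handled carefully, the induction closes: products of generators are handled because $\xi^*$ is multiplicative and the target subring is closed under multiplication, and sums are immediate.
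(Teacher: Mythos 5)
This proposition is quoted directly from Graber--Pandharipande (their Proposition~1) and is not reproved in the present paper, so there is no in-text proof to compare against. Your reconstruction does, however, track the strategy of the cited source: reduce to generators of the tautological ring --- $\psi$- and $\kappa$-classes and pushforwards of tautological classes along gluing morphisms, the standard additive (hence also ring) generators --- and verify the containment generator by generator. The $\psi/\kappa$ comparison formulas you use are standard, and the boundary case is exactly the combinatorial description of the fiber product of two gluing maps that Graber and Pandharipande encode in the strata-algebra pullback formula of their Appendix. The point you flag as the crux is indeed the crux: every component of that fiber product carries a distinguished edge (the one parametrized by $\xi$) whose removal disconnects the dual graph into two pieces, one mapping to each tensor factor, and the excess term in the self-intersection case is $-\psi'\otimes 1-1\otimes\psi''$, so everything lands in $RH^\bullet(\M_{g_1,n_1+1})\otimes RH^\bullet(\M_{g_2,n_2+1})$. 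One small slip worth noting: the parenthetical ``the tautological rings are closed under the K\"unneth components'' is not the relevant justification (and is a different statement from what you actually need); to view $RH^\bullet\otimes RH^\bullet$ as a subring of $H^\bullet(\M_{g_1,n_1+1}\times\M_{g_2,n_2+1})$ you only need the K\"unneth isomorphism over $\Q$ and the observation that a tensor product of subrings is a subring.
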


We say that a cycle $\lambda\in H^\bullet (\M_{g_1,n_1})\otimes H^\bullet(\M_{g_2,n_2})$ \emph{admits a tautological K\"{u}nneth decomposition} if $\lambda \in RH^{\bullet}(\M_{g_1,n_1})\otimes RH^{\bullet}(\M_{g_2,n_2})$.

 \section{Proof of Theorem \ref{thmain} and \ref{thmainop}}

We are now ready to prove Theorem \ref{thmain}. We start by proving the following weaker result.
\begin{proposition}\label{easyprop}
We have 
  \[
  [\B_{g,0,2m}]\not\in RH^\bullet(\M_{g,2m})
 \]
 for $g+m=12$ and $g\geq 2$.
\end{proposition}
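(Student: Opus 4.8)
The plan is to reduce, via a gluing morphism, to the known nontautological example $[\Bo_{2,0,20}]$ (equivalently $[\B_{2,0,20}]$) of Graber and Pandharipande. Suppose for contradiction that $[\B_{g,0,2m}]\in RH^\bullet(\M_{g,2m})$ with $g+m=12$ and $g\geq 2$. Choose a gluing morphism $\xi\colon \M_{2, 2m'+1}\times \M_{g-2,1}\to \M_{g,2m}$ that attaches a fixed genus-$(g-2)$ "tail" to a genus-$2$ component carrying all $2m$ marked points, where $2m' = 2m$, so $m+g-2 = 10$ and on the genus-$2$ side we have $2m'+1 = 2(10)+1 = 21$... more precisely one wants the genus-$2$ piece to look like $\M_{2,20}$ after the analysis, so I would instead glue so that the genus-$2$ component carries $2m$ of the points and one node, and arrange the remaining marked points to be distributed so that the restriction of $\B_{g,0,2m}$ to the boundary divisor contains, as a component, the locus $\B_{2,*,20}$ with the bielliptic involution acting by the permutation $(1\,2)\cdots(19\,20)$ on $20$ of the points. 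The key geometric input is that a generic bielliptic curve of genus $g$ degenerating to this boundary stratum is forced to have its bielliptic involution restrict to a bielliptic (or suitable) involution on the genus-$2$ component fixing the node; this is where the combinatorics of $g+m=12$, hence $(g-2)+m = 10$ and $2\cdot 2 - 2 + 20 = 22$ ramification-plus-marked points, must be checked to line up with the count $\dim \Ba_{2,20} = 2\cdot 2 - 2 + 10 = 12$.

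The steps, in order, are: (1) identify the precise gluing morphism $\xi$ and compute $\xi^*[\B_{g,0,2m}]$ geometrically, showing that one of its components (with nonzero multiplicity) is $[\B_{2,0,20}]\times[\text{pt-class on the tail}]$ or a product involving $[\B_{2,0,20}]$ and a tautological class on $\M_{g-2,1}$; here I would use that the bielliptic involution on a nodal stable curve in this boundary stratum must preserve each component and the node, so the restriction decomposes the admissible cover into admissible covers of the pieces, and the component carrying the $20$ points with the permutation action is exactly $\Ba_{2,20}$ mapping to $\B_{2,*,20}$. (2) Apply Proposition \ref{prop:Kunneth}: since $[\B_{g,0,2m}]$ is assumed tautological, $\xi^*[\B_{g,0,2m}]\in RH^\bullet(\M_{2,2m'+1})\otimes RH^\bullet(\M_{g-2,1})$, i.e.\ it admits a tautological Künneth decomposition. (3) Project to the genus-$2$ Künneth factor (e.g.\ by pushing forward along the tail factor $\M_{g-2,1}\to \mathrm{pt}$ after pairing with a suitable tautological class, or by restricting the tail to a point where the relevant tautological class is nonzero), concluding that $[\B_{2,*,20}]\in RH^\bullet(\M_{2,22})$. (4) Push forward along the forgetful map $\M_{2,22}\to \M_{2,20}$ that forgets the two ramification points (the source map composition already built into the definition of $\B$), or argue directly at the level of $\M_{2,20}$, to contradict the Graber–Pandharipande result that $[\B_{2,0,20}]\notin RH^\bullet(\M_{2,20})$.

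The main obstacle I anticipate is step (1): controlling the scheme-theoretic restriction $\xi^*[\B_{g,0,2m}]$ and in particular verifying that the component $[\B_{2,0,20}]$ appears with nonzero coefficient and is not "contaminated" by other components in a way that spoils the Künneth argument after projection. Concretely, one must (a) enumerate which admissible bielliptic structures on the degenerate curve are limits of bielliptic structures on the generic genus-$g$ curve — this is a transversality/dimension count using $\dim \Ba_{g,2m} = 2g-2+m$ — and (b) ensure the genus-$(g-2)$ tail contributes only tautological classes (which is automatic if the tail is rigid, e.g.\ genus $1$ with enough marked points, or if one can choose the tail so its contribution is a monomial in $\psi$- and boundary classes). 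The bound $g\geq 2$ and $g+m=12$ should be exactly what makes $20 = 2\cdot(2\cdot 2 - 2 - 0) + \dots$ — i.e.\ the number of ramification points plus paired marked points on the genus-$2$ component equal to $22$, matching $\M_{2,20}$ after forgetting the $2$ ramification points — so the arithmetic of these identities is the second thing to pin down carefully.
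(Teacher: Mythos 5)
Your proposal takes a genuinely different route from the paper, but it has a gap that I do not see how to repair. You want to restrict $[\B_{g,0,2m}]$ to a one-node boundary stratum $\M_{2,\ell}\times\M_{g-2,1}$ and recognize the Graber--Pandharipande class $[\B_{2,0,20}]$ (up to a tautological hyperelliptic factor on the tail) among the components of the pullback. The obstruction is a marked-point count. In $\M_{g,2m}$ with $g+m=12$ and $g\geq 3$ one has $2m=24-2g\leq 18$, so the genus-$2$ component of a one-node degeneration carries at most $2m$ marked points plus one node, hence lives in $\M_{2,\ell}$ with $\ell\leq 2m+1\leq 19<20$. (Your substitution ``$2m'+1=2(10)+1=21$'' already presupposes $m=10$, i.e.\ $g=2$, which is the known case, not the one you are trying to reduce to.) Moreover, when the bielliptic involution preserves both components and fixes the node, the quotient decomposes into a genus-$1$ piece over one component and a genus-$0$ piece over the other, so only one side is bielliptic --- and on that genus-$2$ side the node is a branch point, so you see $\B_{2,1,2m_1}$ with $2m_1\leq 18$, not $\B_{2,0,20}$. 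The classes $\B_{2,1,2m_1}$ with $2m_1\leq 18$ sit strictly below the threshold $g+m=12$ (they correspond to $g+m\leq 11$), and neither Graber--Pandharipande nor this paper claims they are nontautological, so no contradiction can be extracted. You cannot recover the situation by attaching the tail along two or more nodes either: the tail would then have to be genus $0$ with only the node-points on it, which is unstable, or else marked points migrate to the tail and the count on the genus-$2$ side drops further.

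The paper's proof is structurally closer to Graber--Pandharipande's own argument than to a reduction to it: it pulls back along the gluing morphism $\iota_2\colon\M_{1,11}\times\M_{1,11}\to\M_{g,2m}$ which identifies the first $g-1$ marked points of the two genus-$1$ factors. Under this gluing the bielliptic involution \emph{swaps} the two genus-$1$ components, and the pullback of $[\B_{g,0,2m}]$ restricted to the open part is a positive multiple of the diagonal $[\Delta_o]\subset \Mo_{1,11}\times\Mo_{1,11}$ (Lemma~\ref{lem:pullback}). The diagonal fails to have a tautological K\"unneth decomposition because $H^{11}(\M_{1,11})\neq 0$, and boundary corrections are harmless by Lemma~\ref{lem:CohM11}. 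When $g=2$ this \emph{is} the Graber--Pandharipande morphism; for $g\geq 3$ it simply glues more points rather than attaching a tail. Your reduction strategy would, if the arithmetic had worked out, have the advantage of treating Graber--Pandharipande's theorem as a black box; but the numerics force the genus-$2$ bielliptic locus that appears to have fewer than $20$ paired points, so the black box does not apply.
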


\begin{proof}
 Let $\iota_1\colon\Mo_{1,11}\times \Mo_{1,11} \rightarrow \M_{1,11}\times \M_{1,11}$ be the inclusion and  $\iota_2 \colon\M_{1,11}\times \M_{1,11}\rightarrow \M_{g,2m}$ the gluing morphism which glues the corresponding first $g-1$ points of the two factors and orders the remaining points by sending the $k$'th marked point of the first curve to $2k-1$ and the $k$'th marked point of the second curve to $2k$. Let $\iota$ be the composition $\iota_2 \circ \iota_1$ and let $\Delta$ resp. $\Delta_o$ be the diagonal of  $\M_{1,11}\times \M_{1,11}$ resp. $\Mo_{1,11}\times \Mo_{1,11}$ so that $\iota_1^*([\Delta])=[\Delta_o]$. In Lemma \ref{lem:pullback} we will prove that $\iota^*([\B_{g,0,2m}])= \alpha [\Delta_o]$ for some $\alpha\in \Q_{> 0}$.  Let $\partial(\M_{1,11}\times \M_{1,11}):=((\partial \M_{1,11})\times \M_{1,11}) \cup (\M_{1,11} \times (\partial \M_{1,11}))$. Since the sequence 
\begin{center}
 \begin{tikzcd}
 A^{10}(\partial (\M_{1,11}\times \M_{1,11}))  \arrow{r} & A^{11} (\M_{1,11}\times \M_{1,11}) \arrow{r}{\iota_1^*}&  A^{11}(\Mo_{1,11}\times \Mo_{1,11}) \arrow{r} & 0  
 \end{tikzcd}
\end{center}
 is exact there exists a class $B\in A^{10}(\partial (\M_{1,11}\times \M_{1,11}))$ such that $\iota_2^*([\B_{g,0,2m}]) = \alpha [\Delta] + B$.

 The class $B$ admits a tautological K\"{u}nneth decomposition by Lemma \ref{lem:CohM11}.\ref{point2}. Given a basis $\{ e_i\}_{i\in I}$ for $H^\bullet(\M_{1,11})$ with dual basis $\{\hat{e}_i\}_{i\in I}$ the cohomology class of the diagonal can be written as 
 \[
 [\Delta]=\sum_{i\in I} (-1)^{\deg e_i} e_i \otimes \hat{e_i}.
 \]
 In particular since $H^{11}(\M_{1,11})\neq 0$ the diagonal $[\Delta]$ does not admit a tautological K\"{u}nneth decomposition. Since the pullback of a tautological class along a (composition of) gluing morphisms admits a tautological K\"{u}nneth decomposition by Proposition \ref{prop:Kunneth}, this shows that~$[\B_{g,0,2m}]$ is nontautological.
\end{proof}

\begin{lemma}\label{lem:pullback}
 Consider the composition of gluing morphisms $\iota\colon \Mo_{1,11}\times \Mo_{1,11} \rightarrow \M_{g,2m}$ defined above. We have $\iota^*(\B_{g,2m})= \alpha [\Delta_o]$ for some $\alpha \in \Q_{>0}$.
\end{lemma}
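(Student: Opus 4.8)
The plan is to analyze the preimage $\iota^{-1}(\B_{g,0,2m})$ set-theoretically first, and then promote the set-theoretic description to an equality of cycle classes by checking multiplicities. Recall that $\iota$ glues the first $g-1$ of the $11$ points on each of the two copies of $\Mo_{1,11}$ in pairs, producing a genus-$g$ nodal curve, and distributes the remaining $2m = 2(12-g)$ points as the $2m$ marked points switched by the involution. A point in the image lies in $\B_{g,0,2m}$ precisely when the resulting genus-$g$ curve $S$ carries an admissible bielliptic structure compatible with the marking pattern $(1\;2)\cdots(2m-1\;2m)$. So the first step is: given two genus-$1$ curves $E_1, E_2$ (with their $11$ marked points), when does the curve $S = E_1 \cup E_2 / (\text{glue first } g-1 \text{ pairs})$ admit such a bielliptic structure? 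The natural candidate involution on $S$ is the one that swaps $E_1 \leftrightarrow E_2$; for this to be an admissible bielliptic involution inducing the prescribed action on marked points one needs an isomorphism $E_1 \xrightarrow{\sim} E_2$ carrying the $k$-th point to the $k$-th point for all $11$ marked points, and one needs to check the quotient $S/\text{(involution)}$ has arithmetic genus $1$ with the node images being the branch data as required. The upshot should be that $\iota^{-1}(\B_{g,0,2m}) = \Delta_o$ as sets, where $\Delta_o \subset \Mo_{1,11}\times\Mo_{1,11}$ is the diagonal.

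Next I would argue there are no other components: a priori $S$ could conceivably admit a bielliptic structure whose involution does not simply swap the two components, or which swaps them but via a different matching of the nodes; I need to rule these out using the combinatorics of admissible covers (the involution must send nodes to nodes and the $2m$ free marked points are permuted in the fixed pairing, while the $2g-2$ ramification points were forgotten/stabilized). Since $E_1$ and $E_2$ are irreducible of genus $1$, an involution of $S$ either preserves or swaps the two components; the preserving case would force each $E_i$ to be itself bielliptic with compatible structure, but a genus-$1$ curve has no bielliptic (genus-$1$ quotient, degree $2$) structure of the right ramification type because the count $k = 2g+2-4g' = 2\cdot 1 + 2 - 4 = 0$ for $g=g'=1$ forces an étale double cover, and keeping track of which of the $11$ points are ramification versus free then gives a contradiction with the required marked-point pattern for $g \ge 2$; and one checks this branch cannot contribute inside the open locus in any case. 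This leaves only the swap case, which is exactly the diagonal, establishing $\supp \iota^*([\B_{g,0,2m}]) = \Delta_o$.

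For the multiplicity, since $\Delta_o$ is irreducible and reduced it suffices to compute the coefficient $\alpha$ at a generic point, and to see that $\iota^*([\B_{g,0,2m}])$ is actually a well-defined (proper) pullback of a cycle class, which follows because $\iota$ factors through gluing morphisms along which refined Gysin pullback is available, or more simply because one can work with the cohomological class and restrict. At a generic point of $\Delta_o$ the curve $S$ is nodal with two smooth genus-$1$ components and $g-1$ nodes, and the excess/transversality can be read off from the normal bundle comparison: the dimension count $\dim \Ba_{g,2m} = 2g-2+m$ versus $\dim\M_{g,2m} = 3g-3+2m$ gives the expected codimension $g-1+m$, which matches $\codim \B_{g,0,2m}$, so $\B_{g,0,2m}$ has the expected dimension and its class is (a positive multiple of) the fundamental class of the locus; restricting to the boundary and intersecting with the image of $\iota$, the positivity of $\alpha$ comes from the fact that the map $\Ba_{g,2m} \to \B_{g,0,2m}$ is dominant and the relevant local intersection numbers are positive (the automorphism-group orders entering as $\alpha$ are positive rationals, e.g.\ an extra factor of $2$ for the involution swapping the components, possibly times $2^{g-1}$ from node orderings).

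The main obstacle I anticipate is the careful bookkeeping in the set-theoretic step: correctly tracking which marked points on $\Mo_{1,11}$ become nodes of $S$ and which become the free points $y_1,\dots,y_{2m}$, and confirming that the target curve $T = S/\text{(involution)}$ genuinely has arithmetic genus $1$ (not $0$) so that we land in the bielliptic and not the hyperelliptic locus — this uses Riemann–Hurwitz on the normalization together with the node count, and it is exactly where the numerics $g + m = 12$, $n=0$ enter. The rest (irreducibility of $\Delta_o$, reducedness, positivity of $\alpha$) is comparatively routine, modulo being honest that we only need the statement up to a positive scalar, which is all that Proposition~\ref{easyprop} uses.
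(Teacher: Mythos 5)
Your overall strategy matches the paper's: identify the set-theoretic fiber as the diagonal $\Delta_o$, then compare codimensions to promote this to an equality of cycle classes up to a positive rational multiple. However, there are three concrete gaps in the case analysis and in one construction.

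First, to show that $\Delta_o$ actually lies in the fiber you need to exhibit, for each pair $(E,E)$, an admissible bielliptic cover in $\Ba_{g,2m}$ mapping to the glued curve $S = E\cup E$. The component-swapping involution fixes each node of $S$ but \emph{swaps its branches}, so $S\to S/\tau$ does not send nodes to nodes. In the admissible cover formalism one must insert a rational bridge $R_i$ over each node of $S$ (on the source) and a rational tail $R_i'$ on the target, with the two ramification points on $R_i$ being the forgotten markings. Your proposal treats the quotient $S/\tau$ directly and elides this, but the bridge construction is exactly what makes the map $\Mo_{1,11}^D\to\Ba_{g,2m}$ exist and is needed for the rest of the argument to go through.

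Second, in the case where $\tau$ preserves both components, your Riemann--Hurwitz count $k=2g+2-4g'=0$ only rules out the possibility that $C_i/\tau$ has genus $1$; it does not rule out a hyperelliptic ($g(C_i/\tau)=0$) involution on each component, and it does not by itself control the global arithmetic genus of $T$. When $m>0$ this case dies immediately (the involution must send marked points on $C_1$ to marked points on $C_2$, so it cannot fix both components), but the one genuinely delicate subcase is $g=12$, $m=0$: the paper handles it with a connectivity argument on the nodes of $S$ and $T$ (removing a suitable pair of $\tau$-orbits of nodes of $S$ keeps it connected, forcing $T$ to have at least two non-disconnecting nodes and hence arithmetic genus $\geq 2$, contradiction). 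Your proposal would need a substitute for that argument.

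Third, you flag but do not carry out the case ``$\tau$ swaps the components but not via the identity matching of nodes'', i.e.\ $\tau(Q_i)=Q_j$ for some $i\neq j$. The paper rules this out with a short genus count: removing the image node $P$ from $T$ leaves a connected genus-$0$ curve, but the bielliptic map restricts to an isomorphism $C_1\setminus\{Q_i,Q_j\}\to T\setminus\{P\}$, and the left side has genus $1$. Without an argument here the set-theoretic identification of the fiber with $\Delta_o$ is not established, and the lemma does not follow. Once these three points are filled in, the codimension and positivity step you sketch is fine and is the same as the paper's.
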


\begin{proof}
 Consider the fiber diagram
\begin{equation*}
\begin{tikzcd}
  F  \arrow{r} \arrow{d} &\Ba_{g,2m}\arrow{d}{\phi}\\
 \Mo_{1,11}\times \Mo_{1,11}  \arrow{r}{\iota}& \M_{g,2m}
 \end{tikzcd}
\end{equation*}
We will describe the fiber product $F$, or rather the push forward of its class to $\Mo_{1,11}\times \Mo_{1,11}$.

Consider the moduli stack  $\Mo_{1,11}^D$, there is a closed embedding $\Mo_{1,11}^D\rightarrow \Mo_{1,11}\times \Mo_{1,11}$, $(C_1\cup C_2\rightarrow C) \mapsto (C_1,C_2)$ with image the diagonal $\Delta_o$. 
We define a map $\eta\colon\Mo_{1,11}^D \rightarrow \Ba_{g,2m}$ as follows: on the source curves~$\eta$ attaches rational bridges $R_i$ between the corresponding marked points $y_{i,1}$ of $C_1$ and~$y_{i,2}$ of $C_2$ for all $1\leq i \leq g-1$ and on the target curve it attaches a rational curve $R_i'$ with two marked points to the corresponding marked point~$y_i$ of~$C$. The trivial double cover $C_1\cup C_2\rightarrow C$ then induces an admissible double cover
\[
 \left(C_1\cup C_2 \cup \bigcup_{i=1}^{g-1}R_i\, ;\, y_{g,1},y_{g,2},...,y_{11,1},y_{11,2}\right) \longrightarrow \left(C \cup \bigcup_{i=1}^{g-1}R'_i\, ; \, y_g,...,y_{11}\right),
\]
branched at the marked points of each $R'_i$, which maps each pair of marked points $y_{i,1}$, $y_{i,2}$ of $C_1\cup C_2 \cup \bigcup_{i=1}^{g-1}R_i$  to the corresponding marked point $y_i$ of $C \cup \bigcup_{i=1}^{g-1}R'_i$.

By the universal property of fiber products we get a map $\Mo_{1,11}^D\rightarrow F$. We claim that the composition $\Mo_{1,11}^D\rightarrow F\rightarrow F^{\text{red}}$  is a finite\footnote{As in \cite[Definition 1.8]{Vistoli1989}.} surjective morphism. The map $F\rightarrow \Mo_{1,11} \times \Mo_{1,11}$ is proper since properness is stable under base extension, the map~$\Mo_{1,11}^D\rightarrow \Mo_{1,11}\times \Mo_{1,11}$ is proper because $\M_{1,11}^D\rightarrow \M_{1,11}\times \M_{1,11}$ is proper. It follows that~$\Mo_{1,11}^D\rightarrow F$ is proper. Since the map $\Mo_{1,11}^D\rightarrow \Mo_{1,11}\times \Mo_{1,11}$ is quasifinite so is $\Mo_{1,11}^D\rightarrow F$. Since $\Mo_{1,11}^D\rightarrow F^{\text{red}}$ is proper and quasifinite and $F^{\text{red}}$ is of finite type (and reduced) it remains to check that this map induces a surjection on closed points.

By definition an object of $F$ over $\spec \C$ consists of a curve $\tilde{C}:=(\tilde{C_1},\tilde{C}_2)\in \Mo_{1,11}\times \Mo_{1,11}( \C)$, an object $(S\rightarrow T)\in \Ba_{g,2m}( \C)$ and an isomorphism $\gamma\colon \iota(\tilde{C})\xrightarrow{\sim} \phi(S\rightarrow T)$. To prove the claim we will show that $(\tilde{C},(S\rightarrow T),\gamma)$ is isomorphic to an object in the image of $\Mo_{1,11}^D( \C)$.
Let~$f\colon\tilde{C}_1\cup \tilde{C}_2 \rightarrow \iota(\tilde{C})$ be the map of curves induced by $\iota$, set  $C:=\iota(\tilde{C})$, $C_1:=f(\tilde{C}_1)$ and~$C_2:=f(\tilde{C}_2)$, let $\tau$ be the involution on $C$ induced by the bielliptic involution of~$S\rightarrow T$ and let~$Q_i$ be the node of $C$ corresponding to the $i$'th marking of~$\tilde{C}_1$ and $\tilde{C}_2$.

Since $C_1$ and $C_2$ are smooth there are two possibilities for the action of $\tau$ on~$C$: Either it fixes $C_1$ and $C_2$ or it switches the whole of $C_1$ with the whole of $C_2$. Suppose $\tau$ fixes~$C_1$ and~$C_2$.  By construction the involution $\tau$ maps marked points lying on $C_1$ to marked points lying on $C_2$ so this is only possible if $C$ has no marked points at all. In this case $\tau$ must fix the different strands of $C$ at each~$Q_i$. If the inverse image of $Q_i$ in $S$ were to be a rational bridge $R_i$ then this rational bridge would have 2 marked ramification points which are not nodes, but this would imply that~$\tau$ switches the nodes on the rational bridge and therefore switches the strands of~$C$ at~$Q_i$. It follows that the inverse image of each $Q_i$ in $S$ is a single node $\hat{Q}_i$. Since $C_1$ and $C_2$ are smooth, $\tau$ induces an involution on the set of nodes $\{\hat{Q}_1,...,\hat{Q}_{11}\}$. We can thus find distinct  $\hat{Q}_i$, $\hat{Q}_j\neq \tau(\hat{Q}_i)$ such that~$S- \{\hat{Q}_i,\tau(\hat{Q}_i),\hat{Q}_j,\tau(\hat{Q}_j)\}$ is connected. But this means that there are at least two nodes~$P_i$ and $P_j$ of $T$ such that $T-\{P_i,P_j\}$ is connected. This would imply that the arithmetic genus of~$T$ is at least 2, which is a contradiction.


We can therefore assume $\tau$ maps $C_1$ to $C_2$. Let us first suppose that $\tau$ does not fix all nodes, so there exist some distinct $i$, $j$ such that $\tau(Q_i)=Q_j$. Let $P$ be the image of $\{ Q_i,Q_j\}$ under the bielliptic map. Like before we see that $T\backslash \{P\}$ is connected and it therefore has arithmetic genus 0 (since by assumption the arithmetic genus of $T$ is $1$). However the arithmetic genus of $C_1\backslash \{Q_i,Q_j\})$ is 1 and the bielliptic map restricts to an isomorphism $C_1\backslash \{Q_i,Q_j\}\rightarrow T\backslash \{P\}$, which is a contradiction.

 We have thus proven that $\tau$ switches the components $C_1$ and $C_2$ and fixes the nodes $Q_i$, which implies that $((\tilde{C}_1,\tilde{C}_2), (S\rightarrow T),\gamma)$ is isomorphic to an object in the image of $\M_{1,11}^D( \C)$. This proves that the map $\Mo_{1,11}^D\rightarrow F^{\text{red}}$ is surjective. 

 It follows that the pushforward of $\Delta_o$ to ${\Mo_{1,11}\times \Mo_{1,11}}$ equals the pushforward of $F$ to $\Mo_{1,11}\times\Mo_{1,11}$ up to a scalar. Since \[\codim_{\Mo_{1,11}\times \Mo_{1,11}}\Delta_o=11 = \codim_{\M_{g,2m}}\Ba_{g,2m}\] we see that $\iota^*[\B_{g,2m}]=\alpha [\Delta_o]$ for some $\alpha\in \mathbb{Q}_{>0}$ and~$g+m=12$. 
\end{proof}

 \begin{lemma}\label{lem:CohM11}
  \begin{enumerate}[i]
   \item Every algebraic class of codimension $11$ in $\M_{1,11} \times \M_{1,11}$ supported on $\partial (\M_{1,11}\times \M_{1,11})$ admits a tautological K\"{u}nneth decomposition.\label{point2}
  
   \item Every algebraic class on $\M_{1,11}\times \M_{1,11}$ of complex codimension less than 11 admits a tautological K\"{u}nneth decomposition.\label{point1}  
  \end{enumerate}
 \end{lemma}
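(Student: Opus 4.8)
The plan is to reduce everything to the known structure of the cohomology of $\M_{1,n}$ and the K\"{u}nneth formula. The crucial input is that $H^{2\bullet}(\M_{1,n}) = RH^{2\bullet}(\M_{1,n})$ for all $n$ (Petersen, \cite[Corollary 1.2]{petersen2014}); equivalently, all even cohomology of $\M_{1,n}$ is tautological, while the odd cohomology — which first appears in degree $11$ via $H^{11}(\M_{1,11})\neq 0$ — is by definition nontautological. I would also use that $H^k(\M_{1,n})=0$ for $k$ odd and $k<11$, which follows from Getzler's / Petersen's computations of the $\mathfrak{S}_n$-equivariant cohomology of $\M_{1,n}$ in low degree (or can be extracted from the fact that the first odd cohomology of $\M_{1,n}$ occurs in weight $11$). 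For part \ref{point1}, an algebraic class on $\M_{1,11}\times\M_{1,11}$ lives in $H^{2k}$ with $k\le 10$; by the K\"{u}nneth formula it decomposes as a sum of terms $a\otimes b$ with $\deg a + \deg b = 2k \le 20$. Any nontautological contribution would require one of $a$, $b$ to be an odd (hence degree $\ge 11$) class, forcing the other to have degree $\le 2k-11 \le 9$, which is even and $<11$ and hence automatically tautological — wait, that is not yet a contradiction, so the real point is a parity/degree count: if $a$ has odd degree then $b$ has degree $2k - \deg a$, and since $\deg a\ge 11$ we get $\deg b\le 2\cdot 10 - 11 = 9$; but then $\deg b$ has the same parity as $\deg a$ only if... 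Let me restate: $\deg a + \deg b$ is even, so $\deg a$ and $\deg b$ have the same parity. If both are odd, each is $\ge 11$, so $2k \ge 22 > 20$, impossible. Hence both are even, so both classes are tautological by Petersen, and the K\"{u}nneth decomposition is tautological. This settles \ref{point1}.

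For part \ref{point2}, let $\gamma$ be an algebraic class of codimension $11$ (real degree $22$) supported on $\partial(\M_{1,11}\times\M_{1,11})$. I would write $\partial(\M_{1,11}\times\M_{1,11}) = (\partial\M_{1,11}\times\M_{1,11})\cup(\M_{1,11}\times\partial\M_{1,11})$ and push forward from a resolution of the boundary. Every boundary divisor of $\M_{1,11}$ is a finite quotient of a product $\M_{g',S'}\times\M_{g'',S''}$ (or $\M_{0,S}$ with a self-gluing) with all genera at most $1$ and with strictly fewer marked points after accounting for the gluing node, or of the form $\M_{1,S}$ with two points glued; in all cases the total genus drops or the curve degenerates, and crucially the relevant factors are either genus $0$ spaces (whose cohomology is entirely tautological by Keel \cite{Keel}) or $\M_{1,k}$ with $k\le 10$ — and $H^\bullet(\M_{1,k})$ for $k\le 10$ has no odd classes, hence is entirely tautological. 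So the pushforward of any algebraic class from a boundary stratum of $\M_{1,11}$ is tautological in $\M_{1,11}$. Therefore a class supported on $\partial\M_{1,11}\times\M_{1,11}$ is, after K\"{u}nneth decomposition, a sum $a\otimes b$ with $a\in RH^\bullet(\M_{1,11})$ tautological; and $b$ is an algebraic class on $\M_{1,11}$, which has even degree, hence is tautological by Petersen. The same argument applies to the other boundary piece. Summing, $\gamma$ admits a tautological K\"{u}nneth decomposition.

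The main obstacle is justifying the claim that $H^\bullet(\M_{1,k})$ contains no odd (nontautological) classes for $k\le 10$ — i.e. that the first odd cohomology of the moduli of pointed genus one curves occurs exactly at $n=11$ in degree $11$. This follows from Getzler's computation of the $\mathfrak{S}_n$-equivariant Serre characteristic / Hodge structure of $\M_{1,n}$ together with the weight considerations in \cite{petersen2014}: the only source of odd cohomology is the motive $\mathsf{S}_{12}$ attached to cusp forms of weight $12$, which first contributes to $\M_{1,n}$ when $n$ is large enough to accommodate it, namely $n=11$. I would cite \cite{petersen2014} (and Getzler) for this and for the tautological-ness of all even classes, and spend a sentence verifying that boundary strata of $\M_{1,11}$ only involve the spaces $\M_{0,k}$ and $\M_{1,k}$ with $k\le 10$, so that no odd cohomology can be produced by pushforward from the boundary. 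Once that is in place, parts \ref{point1} and \ref{point2} are immediate from the K\"{u}nneth formula and a degree count as above.
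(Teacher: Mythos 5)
Your argument is correct and is essentially the argument of \cite[Lemma 3]{Graber2001}, which the paper's proof simply cites while pointing out that the two inputs it needs — $RH^{2\bullet}(\M_{1,n})=H^{2\bullet}(\M_{1,n})$ (Petersen) and the vanishing of odd cohomology of $\M_{1,n}$ for $n<11$ (Getzler) — are now available; you have reconstructed that degree–parity K\"{u}nneth argument in detail. One small imprecision: in part~(i) the statement that the second K\"{u}nneth factor $b$ is ``algebraic'' is neither needed nor justified — what you actually use, and what suffices, is that $b$ has even degree (forced by $a$ having even degree and $\deg\gamma=22$) and is therefore tautological by Petersen.
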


\begin{proof}
This is a slightly weaker version of \cite[Lemma 3]{Graber2001}, the proof given there required that $RH^{2\bullet} (\M_{1,n})=H^{2\bullet}(\M_{1,n})$ and $H^k(\M_{1,n})=0$ for $n<11$, for which there was no reference at the time of \cite{Graber2001}. The first equation is \cite[Corollary 1.2]{petersen2014}. The second condition follows from Getzlers' computations for $n<11$ in \cite{Getzler1998}.
\end{proof}

\begin{prgrph}
We have now concluded the proof of Proposition \ref{easyprop}. To prove Theorem \ref{thmain} it remains to show that $[\B_{g,n,2m}]$ is nontautological for all $n$, $g$, $m$ with $g+m>12$.
\end{prgrph}

\begin{proof}[Proof of Theorem \ref{thmain}.]
 We will show in Lemma \ref{lem:add1} and \ref{lem:add2} that if $[\B_{g,n,2m}]$ is nontautological then so  are $[\B_{g,n+1,2m}]$ for $n\leq 2g-3$, and $[\B_{g,n,2m+2}]$. In Lemma \ref{lemg+1} we will show that if $[\B_{g,1,0}]$ is nontautological then so is $[\B_{g+1}]$. Using these statements inductively, with base case the statement of Proposition \ref{easyprop}, we conclude that $[\B_{g,n,2m}]$ is nontautological for all $g+m\geq 12$.
\end{proof}

\begin{lemma}\label{lem:add1}
 If $[\B_{g,n,2m}]$ is nontautological and $n\leq 2g-3$ then so is $[\B_{g,n+1,2m}]$.
\end{lemma}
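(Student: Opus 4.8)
The plan is to realize $[\B_{g,n,2m}]$ as a positive multiple of the pushforward of $[\B_{g,n+1,2m}]$ along a forgetful morphism, and then use that the tautological ring is closed under such pushforwards.

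Concretely, let $\pi\colon \M_{g,n+1+2m}\to \M_{g,n+2m}$ be the morphism forgetting the marked point which, in the source-and-forget construction of the loci $\B_{g,\bullet,2m}$, plays the role of the $(2g-2-n)$-th ramification point of an admissible bielliptic cover. Unwinding the definition --- $\B_{g,n,2m}$ is built by forgetting the first $2g-2-n$ ramification points, which is the same as forgetting the first $2g-3-n$ of them and then one more --- one sees that $\pi$ restricts to a morphism $\B_{g,n+1,2m}\to \B_{g,n,2m}$.

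Next I would verify that this restricted morphism is dominant and generically finite. For dominance: a general point of $\B_{g,n,2m}$ is a smooth bielliptic curve $C$ with $n$ of the $2g-2$ ramification points of its bielliptic involution marked (together with $m$ switched pairs), and since $n\le 2g-3<2g-2$ there is still a ramification point available; marking one such point and choosing a lift to $\Ba_{g,2m}$ produces a preimage in $\B_{g,n+1,2m}$. For generic finiteness: the fibre over a general point injects into the finite set of pairs $(\sigma,p)$ where $\sigma$ is a bielliptic involution of $C$ and $p$ one of its fixed points, which is finite because a general curve carries only finitely many bielliptic involutions (indeed a unique one for $g\ge 3$). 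Since source and target both have dimension $2g-2+m$, it follows that $\pi_*[\B_{g,n+1,2m}]=c\,[\B_{g,n,2m}]$ in $A^\bullet(\M_{g,n+2m})$ --- and likewise in cohomology --- for some $c\in\Q_{>0}$.

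Finally, $\pi$ is a forgetful morphism, so $\pi_*$ preserves the tautological ring $RH^{2\bullet}$; therefore if $[\B_{g,n+1,2m}]$ were tautological then $[\B_{g,n,2m}]=c^{-1}\pi_*[\B_{g,n+1,2m}]$ would be as well, and taking the contrapositive proves the lemma. The step requiring the most care is the bookkeeping identifying $\pi$ restricted to $\B_{g,n+1,2m}$ with a dominant, generically finite map onto $\B_{g,n,2m}$: one must track the labelling of marked points through the forgetful maps in the construction, and confirm that the general fibres are finite --- this is exactly where the hypothesis $n\le 2g-3$ (a spare ramification point exists) and the finiteness of the set of bielliptic involutions of a general curve both enter.
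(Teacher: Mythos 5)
Your proposal is correct and follows essentially the same route as the paper: push forward along the forgetful morphism that drops the "extra" ramification point, note that this carries $\B_{g,n+1,2m}$ onto $\B_{g,n,2m}$ with equal dimensions so that $\pi_*[\B_{g,n+1,2m}]=c[\B_{g,n,2m}]$ for some $c\in\Q_{>0}$, and conclude by the fact that $\pi_*$ preserves the tautological ring. Your extra remarks on dominance and generic finiteness simply spell out the justification the paper leaves implicit.
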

\begin{proof}
Let $\pi\colon \M_{g,n+1+2m}\rightarrow{\M}_{g,n+2m}$ be the morphism which forgets the first point and stabilizes. Since $\pi(\B_{g,n+1,2m})=\B_{g,n,2m}$ and $\dim \B_{g,n+1,2m}=\dim \B_{g,n,2m}$ we have $\pi_*[\B_{g,n+1,2m}]=\alpha [\B_{g,n,2m}]$ for some $\alpha\in \Q_{>0}$. Because the push forward of a tautological class by the forgetful morphism is tautological, the result follows.
\end{proof}

\begin{lemma}\label{lem:add2}
 If $[\B_{g,n,2m}]$ is nontautological then so is $[\B_{g,n,2m+2}]$.
\end{lemma}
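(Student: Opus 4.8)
The plan is to find a morphism relating $\M_{g,n+2m+2}$ to $\M_{g,n+2m}$ under which the bielliptic locus with $2m+2$ switched points pushes forward (up to a positive scalar) to the bielliptic locus with $2m$ switched points, and then invoke that pushforward preserves tautological classes. The natural candidate is the forgetful morphism $\pi\colon \M_{g,n+2m+2}\rightarrow \M_{g,n+2m}$ which forgets the last two marked points $y_{2m+1}$, $y_{2m+2}$ (the final switched pair) and stabilizes.

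First I would check that $\pi(\B_{g,n,2m+2})=\B_{g,n,2m}$: an admissible bielliptic cover with $2m+2$ switched points, after forgetting the last pair and stabilizing the source, is an admissible bielliptic cover with $2m$ switched points (forgetting a conjugate pair of marked points is compatible with the involution), so the image is contained in $\B_{g,n,2m}$; conversely every point of $\B_{g,n,2m}$ arises this way since one can always add a conjugate pair of (generic) marked points to an admissible bielliptic cover. Next I would compare dimensions: from the dimension formula $\dim\Ba_{g,2m}=2g-2+m$ we get $\dim\B_{g,n,2m+2}=\dim\B_{g,n,2m}+1$, while $\pi$ drops dimension by exactly $2$ — so $\pi$ restricted to $\B_{g,n,2m+2}$ is \emph{not} generically finite onto its image, and a naive pushforward would give zero.

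To fix this I would instead forget only \emph{one} of the two new points, say $\pi'\colon \M_{g,n+2m+2}\rightarrow \M_{g,n+2m+1}$ forgetting $y_{2m+2}$. But $\M_{g,n+2m+1}$ is not of the form needed for a bielliptic locus with an even number of switched points. The cleaner route is the following: let $\pi\colon \M_{g,n+2m+2}\rightarrow\M_{g,n+2m}$ forget both new points. Then $\pi|_{\B_{g,n,2m+2}}$ has $1$-dimensional fibers over $\B_{g,n,2m}$ (the fiber over a generic admissible bielliptic cover records the choice of a conjugate pair of points on the source, i.e.\ a point of the genus-$g$ source curve up to the involution, which is $1$-dimensional), so $\pi_*[\B_{g,n,2m+2}]=0$ for degree reasons and we need a different argument. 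Instead I would use the \emph{pullback}: one shows $[\B_{g,n,2m+2}]$ appears (with positive coefficient) in an expression obtained by pulling back $[\B_{g,n,2m}]$ along $\pi$ and multiplying by suitable tautological $\psi$-classes or diagonal classes cutting down the fiber dimension. Concretely, $\pi^*[\B_{g,n,2m}]$ is supported on $\pi^{-1}(\B_{g,n,2m})$, whose main component (source curves with a chosen conjugate pair that may be anywhere) contains $\B_{g,n,2m+2}$ as the sublocus where the two new points are \emph{branch-conjugate}, i.e.\ actually swapped by the involution; imposing this is a codimension condition that can be written tautologically (it is the locus where $y_{2m+1}$ and $y_{2m+2}$ map to the same point of the target, a diagonal-type condition pulled back along the source-to-target structure, which lands in the tautological ring by Proposition \ref{prop:Kunneth}-type reasoning applied on the boundary). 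Then $[\B_{g,n,2m+2}]$ equals such a tautological combination plus classes supported on other components of $\pi^{-1}(\B_{g,n,2m})$; isolating it shows that if $[\B_{g,n,2m+2}]$ were tautological the same would follow for $[\B_{g,n,2m}]$ — wait, this is the wrong direction.

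Let me restate the intended argument in the direction actually needed. We want: $[\B_{g,n,2m}]$ nontautological $\Rightarrow$ $[\B_{g,n,2m+2}]$ nontautological; equivalently, $[\B_{g,n,2m+2}]$ tautological $\Rightarrow$ $[\B_{g,n,2m}]$ tautological. So I would produce a morphism $f$ with $f_*[\B_{g,n,2m+2}]=\alpha[\B_{g,n,2m}]$, $\alpha\in\Q_{>0}$, and $f_*$ preserving tautological classes. The right $f$ is the forgetful map $\M_{g,n+2m+2}\to\M_{g,n+2m}$ forgetting the last conjugate pair, \emph{provided} this map is generically injective on $\B_{g,n,2m+2}$ — which it is not, as computed above. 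The genuine fix: use the composite of the source–target diagram. Recall $\B_{g,n,2m+2}$ is the image of $\Ba_{g,2m+2}$, and there is a forgetful map $\Ba_{g,2m+2}\to\Ba_{g,2m}$ on admissible covers (forget a conjugate pair of marked points on source and the corresponding point on target, then stabilize), which is generically a $\mathbb{P}^1$-bundle (the target point can be any point of the elliptic target curve, $1$-dimensional). This again is not finite. I therefore expect the actual proof to run differently from a single pushforward: the likely mechanism, mirroring Lemma \ref{lem:pullback}, is to pull back $[\B_{g,n,2m+2}]$ along the gluing morphism $\iota$ into the appropriate product $\prod\Mo_{1,11}$ and observe that the same diagonal class $[\Delta_o]$ (carrying the nontautological $H^{11}$) reappears, so nontautologicality propagates; and then the implication is packaged as the lemma states. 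The \textbf{main obstacle} is precisely this dimension mismatch: forgetting a conjugate pair changes the dimension of the bielliptic locus by $+1$ but the dimension of the moduli space by $+2$, so the relationship between $[\B_{g,n,2m+2}]$ and $[\B_{g,n,2m}]$ is not a clean pushforward and must be set up either via a carefully chosen partial forgetful map combined with a $\psi$-class, or via the gluing-pullback strategy of Lemma \ref{lem:pullback}; getting the positive rational coefficient and the tautological bookkeeping right is where the work lies.
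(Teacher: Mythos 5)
Your dimension analysis correctly rules out the naive forgetful pushforward (the bielliptic locus grows by one dimension while the ambient space grows by two, so the pushforward vanishes for degree reasons), and you correctly suspect that some gluing-pullback is needed. However, the plan as written never finds the working argument; it ends by gesturing at two candidate mechanisms, one of which (forgetting a point and inserting a $\psi$-class) you haven't made precise, and the other of which (redoing the $\M_{1,11}\times\M_{1,11}$ computation of Lemma~\ref{lem:pullback}) would not apply here, since that lemma pins the codimension to exactly $11$ and hence only covers $g+m=12$, not the inductive step $g+m>12$ that this lemma must carry.

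The idea you are missing is a specific and much simpler boundary gluing morphism: glue an $\M_{0,3}$ tail at a new marked point, that is
\[
\sigma\colon \M_{g,n+2m+1}\times\M_{0,3}\longrightarrow \M_{g,n+2m+2}.
\]
Geometrically, degenerating the last conjugate pair of switched marked points so that they collide produces a rational tail carrying those two points and attached to the source curve at a \emph{ramification} point of the bielliptic involution. Thus, identifying $\M_{g,n+2m+1}\times\M_{0,3}$ with $\M_{g,n+2m+1}$, one has
\[
\sigma^{-1}(\B_{g,n,2m+2})=\B_{g,n+1,2m},
\]
and the codimensions agree, so $\sigma^*[\B_{g,n,2m+2}]=\alpha[\B_{g,n+1,2m}]$ for some $\alpha\in\Q_{>0}$. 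Now Lemma~\ref{lem:add1} (which you did not invoke) already shows that nontautologicality of $[\B_{g,n,2m}]$ propagates to $[\B_{g,n+1,2m}]$ for $n\le 2g-3$, and since $\sigma$ is a gluing morphism the pullback of any tautological class along it admits a tautological K\"unneth decomposition by Proposition~\ref{prop:Kunneth}; hence $[\B_{g,n,2m+2}]$ must be nontautological. The edge case $n=2g-2$ is handled by first running the same argument with $\M_{g,n+2m}\times\M_{0,3}\to\M_{g,n+2m+1}$ to deduce that $[\B_{g,n-1,2m+2}]$ is nontautological, then applying Lemma~\ref{lem:add1}. In short: the missing step is to trade a switched pair for a ramification point via an $\M_{0,3}$ boundary gluing, and to lean on Lemma~\ref{lem:add1} rather than reproving something about the $\M_{1,11}$ diagonal.
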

\begin{proof}
 Suppose $n< 2g-2$ then by the previous result $[\B_{g,n+1,2m}]$ is nontautological. Consider the gluing morphism 
 \[
 \sigma\colon   \M_{g,n+2m+1}\times\M_{0,3}\rightarrow \M_{g,n+2m+2}
 \]
 which glues the first points of both curves together, then $\sigma^{-1}(\B_{g,n,2m+2})=\B_{g,n+1,2m}$.  
 
 Since $\codim_{\M_{g,n+2m+2}}\B_{g,n,2m+2}=\codim_{\M_{g,n+2m+1}}\B_{g,n+1,2m}$ it follows that $\sigma^*[\B_{g,n,2m+2}]=\alpha [\B_{g,n+1,2m}]$ for some $\alpha\in \Q_{>0}$. Since $\sigma$ is a gluing morphism and the pullback of a tautological class along $\sigma$ admits tautological K\"{u}nneth decomposition $[\B_{g,n,2m+2}]$ is nontautological.
 
 If $n=2g-2$ we can first prove that $[\B_{g,n-1,2m+2}]$ is nontautological in the same way by pulling back through the map $\M_{g,n+2m}\times\M_{0,3}\rightarrow \M_{g,n+2m+1}$ and then use Lemma \ref{lem:add1}.
\end{proof}

\begin{lemma}\label{lemg+1}
 If $[\B_{g,1,0}]$ is nontautological then so is $[\B_{g+1}]$.
\end{lemma}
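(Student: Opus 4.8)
The plan is to run the same argument as in the proof of Lemma~\ref{lem:add2}, this time with the boundary gluing morphism
\[
\xi\colon \M_{g,1}\times\M_{1,1}\longrightarrow \M_{g+1},
\]
which glues the marked point of the genus $g$ curve to the marked point of the elliptic curve; this is an instance of the first gluing morphism listed in the introduction, with $g_1=g$, $g_2=1$ and $n_1=n_2=0$. The goal is to prove that $\xi^{*}[\B_{g+1}]=\alpha\cdot[\B_{g,1,0}\times\M_{1,1}]$ for some $\alpha\in\Q_{>0}$, the class on the right being $\mathrm{pr}_1^{*}[\B_{g,1,0}]$, i.e.\ $[\B_{g,1,0}]\otimes 1$ under the K\"unneth isomorphism $H^{\bullet}(\M_{g,1}\times\M_{1,1})\cong H^{\bullet}(\M_{g,1})\otimes H^{\bullet}(\M_{1,1})$. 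Granting this, assume for contradiction that $[\B_{g+1}]$ is tautological. Then by Proposition~\ref{prop:Kunneth} we may write $\alpha\,[\B_{g,1,0}]\otimes 1=\xi^{*}[\B_{g+1}]=\sum_{i}a_{i}\otimes b_{i}$ with $a_{i}\in RH^{\bullet}(\M_{g,1})$ and $b_{i}\in RH^{\bullet}(\M_{1,1})$. Applying $\mathrm{id}\otimes\epsilon$, where $\epsilon\colon H^{\bullet}(\M_{1,1})\to\Q$ extracts the component of cohomological degree $0$, yields $\alpha[\B_{g,1,0}]=\sum_{i}a_{i}\,\epsilon(b_{i})$, which lies in $RH^{\bullet}(\M_{g,1})$ because $\epsilon([\M_{1,1}])=\epsilon(1)=1$ and $\epsilon(RH^{\bullet}(\M_{1,1}))=RH^{0}(\M_{1,1})=\Q$. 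Hence $[\B_{g,1,0}]\in RH^{\bullet}(\M_{g,1})$, contradicting the hypothesis.

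It therefore remains to compute $\xi^{-1}(\B_{g+1})$. Since $\B_{g+1}$ is the image of the proper stack $\Ba_{g+1,0}$, a stable curve lies in $\B_{g+1}$ exactly when it underlies an admissible bielliptic cover; as in Lemma~\ref{lem:pullback}, the multiplicity bookkeeping can be made rigorous by base changing $\xi$ along the source map $\Ba_{g+1,0}\to\M_{g+1}$. I claim that, set-theoretically, $\xi^{-1}(\B_{g+1})=\B_{g,1,0}\times\M_{1,1}$. The inclusion ``$\supseteq$'' is constructive: given $(C,p)$ underlying an admissible bielliptic cover $S\to T$ with $p$ a ramification point and given any $(E,q)\in\M_{1,1}$, attach $E$ to $S$ at $p$ and a smooth rational curve $R'$ to $T$ at the branch point below $p$, letting $E$ cover $R'$ by its elliptic involution $E\to\mathbb{P}^{1}$, which has the origin $q=p$ among its four branch points; the branch-point count $2g-2-1+3=2(g+1)-2$ works out, the node over $p$ is totally ramified, and the resulting admissible bielliptic cover of genus $g+1$ has source stabilising to $C\cup_{p}E$, so $(C,E)\in\xi^{-1}(\B_{g+1})$.

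For the reverse inclusion ``$\subseteq$'', suppose $C\cup_{p}E\in\B_{g+1}$ (with $C$, $E$ stable of arithmetic genus $g$ and $1$, glued along a separating node $p$) underlies an admissible bielliptic cover, and let $\bar\tau$ be the involution induced on the stabilisation $C\cup_{p}E$. Since $g\geq 2$, the components $C$ and $E$ have different arithmetic genera, so $\bar\tau$ cannot interchange them; it therefore fixes each of them set-wise and hence fixes the node $p$ without exchanging its two branches. In particular $p$ is a ramification point of the admissible cover on both sides, and the target $T$ decomposes as $T_{C}\cup T_{E}$ along the image of $p$ with $p_{a}(T_{C})+p_{a}(T_{E})=p_{a}(T)=1$. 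The $E$-side is a double cover whose source has arithmetic genus $1$ and which is ramified at $p$; by Riemann--Hurwitz its target has genus $0$, so $p_{a}(T_{E})=0$ and $p_{a}(T_{C})=1$. Thus the $C$-side is an admissible bielliptic cover whose source stabilises to $C$ and for which $p$ is a ramification point, i.e.\ $C\in\B_{g,1,0}$. (One also has to allow $S$ to carry a contractible chain of rational curves in place of the node $p$; the same dichotomy for $\bar\tau$ forces the chain to be fixed end-to-end and totally ramified, and the conclusion is unchanged, exactly as in Lemma~\ref{lem:pullback}.)

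Finally, $\codim_{\M_{g+1}}\B_{g+1}=3g-2g=g$ while $\codim_{\M_{g,1}\times\M_{1,1}}(\B_{g,1,0}\times\M_{1,1})=(3g-1)-(2g-1)=g$, so $\xi^{-1}(\B_{g+1})$ has the expected codimension; combined with the scheme-theoretic input from the base change this gives $\xi^{*}[\B_{g+1}]=\alpha\cdot[\B_{g,1,0}\times\M_{1,1}]$ with $\alpha\in\Q_{>0}$, completing the proof. The one genuinely substantial step is the inclusion ``$\subseteq$'', namely ruling out all degenerations of smooth genus-$(g+1)$ bielliptic curves of the shape $C\cup_{p}E$ other than those coming from $\B_{g,1,0}$; this is a direct, if slightly fiddly, analogue of the case analysis in Lemma~\ref{lem:pullback}, and I expect it to be the main obstacle.
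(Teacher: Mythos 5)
There is a genuine gap in the step where you claim $\xi^{-1}(\B_{g+1})=\B_{g,1,0}\times\M_{1,1}$ set-theoretically. Your ``$\subseteq$'' argument runs: since $g\geq 2$, the two subcurves $C$ (arithmetic genus $g$) and $E$ (arithmetic genus $1$) have different genera, so $\bar\tau$ cannot interchange them, hence fixes each set-wise, hence fixes the node $p$. The non-sequitur is ``therefore fixes each set-wise'': the involution $\bar\tau$ permutes \emph{irreducible components} of the curve, and $C$ need not be irreducible. If $C$ is reducible, $\bar\tau$ can send the elliptic tail $E$ to an irreducible component \emph{of} $C$, in which case $\bar\tau$ neither fixes nor interchanges $\{C,E\}$ and does not fix the node $p$ at all.

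This really happens and produces a second boundary component of $\xi^{-1}(\B_{g+1})$. Take a hyperelliptic curve $D$ of genus $g-1$ with involution $\sigma$, a pair of conjugate points $q_1,\,q_2=\sigma(q_1)$ on $D$, and two isomorphic elliptic curves $E_1\cong E_2$. Form $S=E_1\cup_{q_1}D\cup_{q_2}E_2$, of arithmetic genus $g+1$, and let $\tau$ swap $E_1\leftrightarrow E_2$ and act as $\sigma$ on $D$. The quotient $T=E\cup\mathbb{P}^1$ has genus $1$ and $S\to T$ is an admissible bielliptic cover. Cutting $S$ at the node over $q_1$ exhibits it as $C\cup_pE$ with $C=D\cup_{q_2}E_2$ of genus $g$, $E=E_1$, $p=q_1$; this point lies in $\xi^{-1}(\B_{g+1})$ but not in $\B_{g,1,0}\times\M_{1,1}$, since $C$ is reducible with an elliptic tail rather than a bielliptic curve. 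Note $\bar\tau(E_1)=E_2\subset C$, which is exactly the behaviour your dichotomy excludes. This locus is the class $[\H_{g-1,0,2}\times\M_{1,1}^D]$, of dimension $(2(g-1)-1+1)+1=2g-1$, the correct divisor dimension in $\Ba_{g+1}$. Accordingly the paper's formula (following the boundary description in \cite{Pagani2016}) is
\[
\xi^*[\B_{g+1}]=\alpha\,[\B_{g,1,0}\times\M_{1,1}]+\beta\,[\H_{g-1,0,2}\times\M_{1,1}^D]
\]
with $\alpha,\beta\in\Q_{>0}$, not just the first term.

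The conclusion of the lemma can still be rescued along the lines you sketch: the extra class $[\H_{g-1,0,2}\times\M_{1,1}^D]$ \emph{does} admit a tautological K\"unneth decomposition (hyperelliptic loci are tautological by \cite[Theorem~1]{Faber2005}, and pushing forward along a gluing map against a tautological class preserves this), so subtracting it and applying your $\mathrm{id}\otimes\epsilon$ argument, or simply invoking Proposition~\ref{prop:Kunneth} as the paper does, still forces $[\B_{g,1,0}]\in RH^\bullet(\M_{g,1})$, a contradiction. But as written your proof asserts an incorrect set-theoretic identity and misses the case analysis needed to find and then dispose of the second component; that is precisely the ``slightly fiddly'' part you flagged, and it does not reduce to the same dichotomy as Lemma~\ref{lem:pullback}.
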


\begin{proof}
 Let  $\epsilon\colon \M_{g,1}\times \M_{1,1}\rightarrow \M_{g+1}$ be the gluing morphism. From the description of the boundary divisors of $\Ba_{g+1}$ (see \cite[Page 1275-1276]{Pagani2016}) it follows that there exists  $\alpha, \beta\in \Q_{> 0}$ such that
  \[
  \epsilon^*[\B_{g+1}] = \alpha [\B_{g,1,0}\times \M_{1,1}] +\beta [\H_{g-1,0,2}\times \M_{1,1}^D] \in H^\bullet(\M_{g,1}\times \M_{1,1}).
 \]
  
 The class $[\H_{g-1,0,2}\times \M_{1,1}^D] $ admits a tautological K\"{u}nneth decomposition (since the class of the hyperelliptic locus is tautological by \cite[Theorem 1]{Faber2005} and therefore so is its pushforward under a gluing morphism with a tautological class). The class  $[\B_{g,1}\times \M_{1,1}]$ does not admit a tautological K\"{u}nneth decomposition by assumption. It follows by Proposition \ref{prop:Kunneth} that $[\B_{g+1}]$ is nontautological.
\end{proof}

\begin{prgrph}
We will now prove a similar result for the open locus of $\M_{g,2m}$ where $g+m=12$.
\end{prgrph}

\begin{proof}[Proof of Theorem \ref{thmainop}]
The case where $g=2$ is treated in \cite[Section 3]{Graber2001}. We use a similar argument to prove the remaining cases. 
The proof runs by contradiction. Suppose $ [\Bo_{g,0,2m}] \in RH^\bullet (\Mo_{g,2m})$ then there is some collection of cycles $Z_i$ in $\M_{g,2m}$, of complex codimension 11 and supported on $\partial \M_{g,2m}$ such that $\sum  [Z_i] + [\B_{g,0,2m}]$ is a tautological class. Consider again the gluing morphism $\iota_2\colon\M_{1,11} \times \M_{1,11}\rightarrow \M_{g,2m}$ as above. By assumption the pullback of $\sum [Z_i] + [\B_{g,2m}]$ to $\M_{1,11} \times \M_{1,11}$ admits a tautological K\"{u}nneth decomposition whereby the pullback of $\sum [Z_i]$ to $\M_{1,11} \times \M_{1,11}$ must be nontautological.

   We shall use the usual notation that $\Delta_j$ is the locus of curves in $\M_{g,2m}$ consisting of two curves, one of which has genus $j$, glued together in a single node, and $\Delta_{\text{irr}}$ is the locus that generically parametrizes irreducible singular curves. Since $\iota_2(\M_{1,11} \times \M_{1,11})$ does not have a separating node we see that $\iota_2(\M_{1,11}\times \M_{1,11})\not\subset \Delta_j$. The intersection 
   \[
    \Delta_j \cap (\M_{1,11}\times \M_{1,11})
    \]
    therefore lies in $\partial (\M_{1,11}\times \M_{1,11})$. It follows by Lemma \ref{lem:CohM11}.\ref{point2} that $\iota_2^*[Z_i]$ admits a  tautological K\"{u}nneth decomposition if $\supp Z_i\subset \Delta_j$. 
 
 Consider now the $Z_i$ with support inside $\Delta_{\text{irr}}$. We can decompose the map $\iota_2$ as
 \begin{center}
  \begin{tikzcd}
   \M_{1,11}\times \M_{1,11}  \arrow{r}{\iota_2''} & \M_{g-1,2m+2} \arrow{r}{\iota_2'} &\M_{g,2m}
  \end{tikzcd}
 \end{center}
 Then there exist cycles $Y_i$ in $\M_{g-1,2m+2}$ such that $\iota'_{2*}[Y_i]=[Z_i]$. Now 
 \begin{align*}
  \iota_2^*[Z_i] &= \iota''^*_2\iota'^*_1[Z_i]\\
  &=\iota''^*_2 (c_1(N_{\M_{g-1,2m+2}}\M_{g,2m})\cap [Y_i]).
 \end{align*}
 We see that $\iota^*_2[Z_i]$ decomposes as a product of algebraic classes of codimension less than $11$, which admit tautological K\"{u}nneth decomposition by Lemma \ref{lem:CohM11}.\ref{point1}.

We conclude that all the $[Z_i]$ have tautological K\"{u}nneth decomposition when pulled back to $\M_{1,11}\times \M_{1,11}$. Therefore $\iota_2^*(\sum [Z_i] + [\B_{g,0,2m}])$ does not admit a tautological K\"{u}nneth decomposition. It follows by Proposition \ref{prop:Kunneth} that $[\Bo_{g,2m}]$ is nontautological.
\end{proof}

\bibliography{nontautBE}{}
\bibliographystyle{alpha}

J.~van~Zelm, \textsc{Department of Mathematical Sciences, University of Liverpool, Liverpool, L69 7ZL, United Kingdom}
\par\nopagebreak
  \textit{E-mail address}: \texttt{jasonvanzelm@outlook.com}

\end{document}